\def\PP{{\mathbb P}} 
\def\Acal{{\mathcal A}} 
\def\Mcal{{\mathcal M}} 
\def\Ocal{{\mathcal O}}
\def\Pcal{{\mathcal P}}
\def\Xcal{{\mathcal X}}
\newcommand\Mod{\operatorname{Mod}}
\newcommand\sym{\operatorname{Sym}}
\newtheorem{theorem}{Theorem}[section]
\newtheorem{proposition}[theorem]{Proposition}
\newtheorem{lemma}[theorem]{Lemma}
\theoremstyle{definition}
\numberwithin{equation}{section}
\begin{document}
\baselineskip=15.5pt

\title[Moduli space of projective structures on complex curves]{Functions on the moduli
space of projective structures on complex curves}

\author[I. Biswas]{Indranil Biswas}

\address{Department of Mathematics, Shiv Nadar University, NH91, Tehsil
Dadri, Greater Noida, Uttar Pradesh 201314, India}

\email{indranil.biswas@snu.edu.in, indranil29@gmail.com}

\subjclass[2010]{14H15, 32G15, 14H60}

\keywords{Projective structure, mapping class group, moduli space}

\date{}

\begin{abstract}
We investigate the moduli space ${\mathcal P}_g$ of smooth complex projective curves of
genus $g$ equipped with a projective structure. When $g\, \geq\, 3$, it is shown that this moduli space
${\mathcal P}_g$ does not admit any nonconstant algebraic function. This is in contrast with the case
of ${\mathcal P}_2$ which is known to be an affine variety.
\end{abstract}

\maketitle

\section{Introduction}

Given a $C^\infty$ oriented surface $S$, a projective structure on it is defined by giving a coordinate atlas, where each
coordinate function has target ${\mathbb C}{\mathbb P}^1$, such that all the transition functions are M\"obius transformations
(same as holomorphic automorphisms of ${\mathbb C}{\mathbb P}^1$). Therefore, with all the transition functions being
holomorphic, a projective structure on $S$ produces a complex structure on $S$. However, projective structures are
much finer than complex structures. In fact, while the moduli space of compact Riemann surfaces of genus $g\,\geq\,2$ has complex
dimension $3g-3$, the moduli space of genus $g$ surfaces with a projective structure is of complex dimension $6g-6$. When
$g\, =\, 1$, these two numbers are $1$ and $2$ respectively.

For any $g\, \geq\, 2$, the moduli space ${\mathcal M}_g$ parametrizing compact Riemann surfaces of genus $g$ is a smooth
quasiprojective orbifold. The moduli space ${\mathcal P}_g$ parametrizing genus $g$ surfaces
with a projective structure is an algebraic torsor over ${\mathcal M}_g$ for the cotangent bundle $T^*{\mathcal M}_g$. The main
result proved here says that ${\mathcal P}_g$ does not admit any nonconstant algebraic function if
$g\, \geq\, 3$ (see Theorem \ref{thm1}). Since ${\mathcal P}_2$ is an affine variety, it admits plenty of
nonconstant algebraic functions.

\section{The moduli space of projective structures}

\subsection{Projective structure}

Let $X$ be a connected Riemann surface. A holomorphic coordinate chart on $X$ is a pair
of the form $(U,\, \phi)$, where $U\, \subset\, X$ is an analytic open subset and $\phi\,
:\, U\, \longrightarrow\, {\mathbb C}{\mathbb P}^1$ is a holomorphic embedding. A holomorphic
coordinate atlas on $X$ is a collection of coordinate charts $\{(U_i,\, \phi_i)\}_{i\in I}$
such that $X\,=\, \bigcup_{i\in I} U_i$. A projective structure on $X$ is given by a holomorphic
coordinate atlas $\{(U_i,\, \varphi_i)\}_{i\in I}$ satisfying the condition that
for every $i,\, j\, \in\, I\times I$ with $U_i\bigcap U_j\, \not=\, \emptyset$, and every connected
component $V_c\, \subset\, U_i\bigcap U_j$, there is an element
$A^c_{j,i} \, \in\, \text{Aut}({\mathbb C}{\mathbb P}^1)
\,=\, \text{PSL}(2, {\mathbb C})$ such that the
map $(\phi_j\circ\phi^{-1}_i)\big\vert_{\phi_i(V_c)}$ is the restriction of the automorphism $A^c_{j,i}$ of
${\mathbb C}{\mathbb P}^1$ to $\phi_i(V_c)$.

Two holomorphic coordinate atlases $\{(U_i,\, \phi_i)\}_{i\in I}$ and $\{(U_i,\, \phi_i)\}_{i\in I'}$
satisfying the above condition on $\phi_j\circ\phi^{-1}_i$ are called {\it equivalent} if their union
$\{(U_i,\, \phi_i)\}_{i\in I\cup
I'}$ also satisfies the above condition. A {\it projective structure} on $X$ is an equivalence class of
holomorphic coordinate atlases satisfying the above condition (see \cite{Gu}). Every Riemann surface admits
a projective structure. In fact, the uniformization theorem says that the universal cover of $X$ is one of
$\mathbb C$, ${\mathbb C}{\mathbb P}^1$ and the upper-half plane $\mathbb H$. The holomorphic automorphism group
of each of them is contained in $\text{Aut}({\mathbb C}{\mathbb P}^1)$. Therefore, the tautological projective
structure on the universal cover of $X$ produces a projective structure on $X$. Note that the projective structure
on $X$ obtained this way does not depend on the identification of the universal cover of $X$ with
$\mathbb C$, ${\mathbb C}{\mathbb P}^1$ or $\mathbb H$.

The above definition of a projective structure does not bring out its algebraic geometric aspect.
We will recall another formulation of the definition of a projective structure.

Let $X$ be an irreducible smooth complex projective curve. Fix the Borel subgroup
\begin{equation}\label{bsg}
B\, :=\, \Big\{
\begin{pmatrix}
a & b\\
c &d
\end{pmatrix}
\, \in\, \text{PSL}(2,{\mathbb C})\, \mid\, b\, =\, 0\Big\}\, \subset\, \text{PSL}(2,{\mathbb C})\, .
\end{equation}
Let $P\, \longrightarrow\, X$ be an algebraic principal $\text{PSL}(2,{\mathbb C})$--bundle on $X$
equipped with an algebraic connection $D$. We note that any algebraic connection on $X$ is integrable
because $\Omega^{2,0}_X\,=\, 0$. Let
\begin{equation}\label{l1}
P_B\, \subset\, P
\end{equation}
be an algebraic reduction of structure group, to the subgroup $B$ in \eqref{bsg}, given by an algebraic section
\begin{equation}\label{e1}
\sigma\, :\, X\, \longrightarrow\, P/B
\end{equation}
of the natural projection $f\, :\, P/B\, \longrightarrow\, X$. The connection $D$ on $P$ induces a connection on the
fiber bundle $f$, which, in turn, decomposes the tangent bundle $T(P/B)$ as
\begin{equation}\label{j3}
T(P/B)\,=\, T_f\oplus {\mathcal H}\, ,
\end{equation}
where $T_f\, \subset\, T(P/B)$ is the relative tangent bundle for the projection $f$ and ${\mathcal H}\, \subset\,
T(P/B)$ is the horizontal subbundle for the connection on $P/B$. Let
\begin{equation}\label{e2}
\widetilde{\sigma}\, :\, TX\, \longrightarrow\, \sigma^*T_f
\end{equation}
be the composition of homomorphisms 
$$
TX\, \stackrel{d\sigma}{\longrightarrow}\,\sigma^* T(P/B)\,=\, \sigma^*T_f\oplus \sigma^*{\mathcal H}
\, \longrightarrow\, \sigma^*T_f
$$
constructed using the decomposition in \eqref{j3}, where $d\sigma$ is the differential of $\sigma$ in \eqref{e1}. This
homomorphism $\widetilde{\sigma}$ is the second fundamental form of the reduction $P_B$ in \eqref{l1}
for the connection $D$.

A projective structure on $X$ is a triple $(P,\, P_B,\, D)$ as above such that the homomorphism
$\widetilde{\sigma}$ in \eqref{e2} is an isomorphism \cite{Gu}.

The holomorphic cotangent bundle of $X$ will be denoted by $K_X$. The extensions of ${\mathcal O}_X$ by $K_X$ are
parametrized by $$H^1(X,\, \text{Hom}({\mathcal O}_X,\, K_X))\,=\, H^1(X,\, K_X)\,=\, \mathbb C.$$
Let
$$
0\, \longrightarrow\, K_X\, \longrightarrow\, V \, \longrightarrow\, {\mathcal O}_X
\, \longrightarrow\, 0
$$
be the unique extension corresponding to $1\, \in\,  H^1(X,\, \text{Hom}({\mathcal O}_X,\, K_X))\,=\, \mathbb C$.
We note that $V\otimes TX\,=\, J^1(TX)$ if $\text{genus}(X)\,\not=\, 1$.

We assume that $\text{genus}(X)\,\not=\, 1$.

Let
\begin{equation}\label{e3}
P^0\, \longrightarrow\, X
\end{equation}
be the principal $\text{PSL}(2,{\mathbb C})$--bundle given by ${\mathbb P}(V)$; the fiber
of $P^0$ over any $x\, \in\, X$ is the space of all isomorphisms ${\mathbb C}{\mathbb P}^1\,
\longrightarrow\, P^0_x$. The above line subbundle $K_X\, \hookrightarrow\, V$ produces a reduction
of structure group
\begin{equation}\label{f1}
P^0_B\, \subset\, P^0
\end{equation}
to the Borel subgroup $B\,\subset\, \text{PSL}(2,{\mathbb C})$ in \eqref{bsg}. The fiber of $P^0_B$ over any
$x\, \in\, X$ is the space of all isomorphisms ${\mathbb C}{\mathbb P}^1\,
\longrightarrow\, P^0_x$ that take the point $e_2\, \in\, {\mathbb C}{\mathbb P}^1$ to the point of $P^0_x$ given by 
$(K_X)_x\, \subset\, V_x$, where $\{e_1,\, e_2\}$ is the standard basis of ${\mathbb C}^2$.

For any projective structure $(P,\, P_B,\, D)$ on $X$, the pair $(P,\, P_B)$ coincides with
$(P^0,\, P^0_B)$ \cite{Gu}. So only the connection $D$ moves when the projective structure on the
Riemann surface $X$
moves. Furthermore, for every connection $D$ on $P^0$, the triple $(P^0,\, P^0_B,\, D)$ is actually
a projective structure on $X$. From these it follows that the space of all projective structures
on $X$ is an affine space modeled on the vector space $H^0(X,\, K^{\otimes 2}_X)$. In particular, ${\mathbb C}{\mathbb P}^1$
has exactly one projective structure.

\subsection{Moduli spaces}

Fix a compact connected oriented $C^\infty$ surface $\Sigma$ of genus $g\,\ge\, 2$. Denote by ${\rm Diff}^+(\Sigma)$ the group
of all orientation preserving diffeomorphisms of $\Sigma$. Let ${\rm Diff}^0(\Sigma)\, \subset\, {\rm Diff}^+(\Sigma)$ be the
connected component containing the identity element. Then the quotient
$$
\Mod(\Sigma)\,:=\,\pi_0({\rm Diff}^+(\Sigma))\,=\, {\rm Diff}^+(\Sigma)/{\rm Diff}^0(\Sigma)
$$
is known as the mapping class group. Let ${\rm Com}(\Sigma)$ denote the space of $C^\infty$ complex structures on $\Sigma$
compatible with its orientation. The quotient ${\rm Teich}(\Sigma)\,:=\, 
{\rm Com}(\Sigma)/{\rm Diff}^0(\Sigma)$ for the natural action of ${\rm Diff}^0(\Sigma)$ on
${\rm Com}(\Sigma)$ is the Teichm\"uller space for $\Sigma$. The
group $\Mod(\Sigma)$ acts property discontinuously on ${\rm Teich}(\Sigma)$. The corresponding orbifold
$$
{\mathcal M}_g\,=\, {\rm Teich}(\Sigma)/\Mod(\Sigma)
$$
is the moduli space of smooth complex projective curves of genus $g$. This ${\mathcal M}_g$
is a smooth algebraic orbifold, and the underlying space is an irreducible complex quasiprojective 
variety of dimension $3(g-1)$.

In order to avoid stack issues, we often pass to a finite cover 
$\widehat{\Mcal}_g\,\longrightarrow\, \Mcal_g$ with smooth total space that is defined by a finite index
torsion-free subgroup of the mapping class group of genus $g$ (these exist \cite{Lo}, see below). It is well-known that 
$\widehat{\Mcal}_g$ then carries a universal family of curves, $\Xcal_{\widehat{\Mcal}_g}/\widehat{\Mcal}_g$.  
If we take the subgroup of the mapping class group to be normal, with quotient the finite group $G$, then this 
covering comes with a $G$-action. While the notions that we discuss below only pertain to families of smooth 
genus $g$ curves, we give them a sense to the universal curve by taking the corresponding $G$-invariant notion 
on $\Xcal_{\widehat{\Mcal}_g}/\widehat{\Mcal}_g$.

We consider families of projective curves $p\,:\,\Xcal\,\longrightarrow\, S$ whose fibers are smooth and 
geometrically irreducible. Let $\Pcal$ denote the functor which assigns to such a family the bundle of 
projective structures on its fibers; this is a torsor over $p_*K_{\Xcal/S}^{\otimes 2}$,
where $K_{\Xcal/S}^{\otimes 2}$ is the relative canonical bundle on $\Xcal$ for the projection $p$. We may compare this 
with the functor $\widetilde \Pcal$ which assigns to this family the isomorphism classes of pairs $(P,\,\widetilde D)$ 
consisting of a principal ${\rm PSL}(2,{\mathbb C})$--bundle on $\Xcal$ that are fiberwise topologically trivial 
endowed with a fiberwise algebraic connection $\widetilde D$ on $P$ that is (fiberwise) \emph{irreducible} in 
the sense that it is not induced by a connection on a principal $B$--bundle \cite{Si1}, \cite{Si2}. We here 
recall that the topological types of principal ${\rm PSL}(2,{\mathbb C})$--bundles on a smooth projective curve are 
parametrized by $\pi_1({\rm PSL}(2,{\mathbb C}))\,=\, {\mathbb Z}/2{\mathbb Z}$. The construction in the previous 
section identifies $\Pcal(\Xcal/S)$ with the closed subscheme of $\widetilde\Pcal(\Xcal/S)$ defined by the pairs 
$(P,\, \widetilde{D})$ for which $P$ coincides with $P^0$ in \eqref{e3}.

We shall abbreviate $\Pcal(\Xcal_{\widehat{\Mcal}_g}/\widehat{\Mcal}_g)$ by $\Pcal\big\vert\widehat\Mcal_g$ and we will 
simply write $\Pcal_g$ for its $G$-quotient (assuming that $\widehat{\Mcal}_g\,\longrightarrow\, \Mcal_g$ is a 
$G$-cover). So we have the forgetful morphisms
\begin{equation}\label{e4}
\Phi\,:\, \Pcal\big\vert\widehat\Mcal_g\,\longrightarrow\, \widehat\Mcal_g
\end{equation}
and $\Pcal_g\,\longrightarrow\, \Mcal_g$, the former being an affine
bundle over $\widehat\Mcal_g$ that is modeled on the cotangent bundle $T^*\widehat\Mcal_g$. This means that if we filter $\Phi_*\Ocal_{\Pcal|\widehat\Mcal_g}$ by fiberwise degree:
\begin{equation}\label{j4}
\Ocal_{\widehat\Mcal_g}\,=\,\Phi_*^{\le 0}\Ocal_{\Pcal|\widehat\Mcal_g}\,\subset
\,\Phi_*^{\le 1}\Ocal_{\Pcal|\widehat\Mcal_g}\,\subset\,\cdots\,\subset\, \Phi_*^{\le d}\Ocal_{\Pcal|\widehat\Mcal_g}
\,\subset\,\cdots,
\end{equation}
then this filtration exhausts $\Phi_*\Ocal_{\Pcal|\widehat\Mcal_g}$ (the latter is the union of these subsheaves which is same as 
the direct limit) and for $d\,\ge\, 1$, the successive quotient $\Phi_*^{\le d}\Ocal_{\Pcal|\widehat\Mcal_g}/\Phi_*^{\le 
d-1}\Ocal_{\Pcal|\widehat\Mcal_g}$ can be identified with the symmetric power $\sym^d T\widehat\Mcal_g$.

\section{Functions on the moduli space}

The forgetful morphism $\Pcal_g\,\longrightarrow\, \Mcal_g$ is affine. Since ${\mathcal M}_2$ is affine, it follows that 
$\Pcal_2$ is also affine. On the other hand, the only regular functions on $\Mcal_g$ for $g\ge 3$ are the constants. The 
main result of this section is that this does not change if we pass to $\Pcal_g$:

\begin{theorem}\label{thm1}
For $g\, \geq\, 3$, every regular function on $\Pcal_g$ is constant.
\end{theorem}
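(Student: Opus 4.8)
The plan is to use the filtration \eqref{j4} to reduce the theorem to a vanishing statement for symmetric tensors on moduli, and then to prove that vanishing by hyperbolicity. A regular function on $\Pcal_g$ is exactly a $G$-invariant regular function on $\Pcal|\widehat\Mcal_g$, that is, a $G$-invariant global section of $\Phi_*\Ocal_{\Pcal|\widehat\Mcal_g}$. Because the filtration \eqref{j4} exhausts this sheaf and global sections commute with the filtered colimit on the Noetherian scheme $\widehat\Mcal_g$, any such function $f$ lies in $\Phi_*^{\le d}\Ocal_{\Pcal|\widehat\Mcal_g}$ for some minimal $d\ge 0$. I would dispose of the case $d=0$ first: there $f$ is pulled back from $\widehat\Mcal_g$ and, being $G$-invariant, descends to a regular function on $\Mcal_g$, which is constant for $g\ge 3$. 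Everything thus comes down to excluding $d\ge 1$.

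Next I would pass to the leading symbol. For $d\ge 1$, minimality of $d$ makes the image of $f$ in the successive quotient a nonzero global section, and by \eqref{j4} this quotient is $\sym^d T\widehat\Mcal_g$; the section is again $G$-invariant, hence is a global section of $\sym^d T\Mcal_g$ over the orbifold $\Mcal_g$. So the theorem follows once one establishes the key vanishing
\begin{equation*}
H^0\big(\Mcal_g,\, \sym^d T\Mcal_g\big)\,=\,0\qquad\text{for } g\ge 3,\ d\ge 1,
\end{equation*}
since then the symbol of $f$ must vanish, contradicting the minimality of $d$ and forcing $d=0$. The reduction up to this point is formal.

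The vanishing is the crux, and I would prove it using the hyperbolicity of $\Mcal_g$. Teichm\"uller space is a bounded domain (Bers), and the Teichm\"uller metric, which agrees with the Kobayashi metric (Royden), has holomorphic sectional curvature bounded above by a negative constant and descends to $\Mcal_g$. Given a nonzero section $s$ of $\sym^d T\Mcal_g$, consider its pointwise norm $\|s\|$ with respect to the induced metric on $\sym^d T\Mcal_g$; a Bochner-type identity together with the strictly negative curvature gives $i\partial\bar\partial\log\|s\|^2\,\ge\, c\,\omega$ on the open set where $s\neq 0$, where $\omega$ is the metric form and $c>0$. Since $\Mcal_g$ has finite volume for this metric, a maximum-principle (Omori--Yau / Ahlfors--Schwarz) argument then forces $s\equiv 0$. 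For $d=1$ this is transparent: a nonzero section would be a holomorphic vector field, producing a nonconstant map from $\CC$ and contradicting Kobayashi hyperbolicity; for general $d$ the norm estimate plays this role.

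The main obstacle is the non-compactness of $\Mcal_g$: the maximum principle is immediate on a compact hyperbolic manifold, but here one must control $\|s\|$ and the degeneration of the metric near the Deligne--Mumford boundary $\partial\overline{\Mcal}_g$. I would exploit the algebraicity of $f$ for this: $s$ extends to a meromorphic section of $\sym^d T\overline{\Mcal}_g(-\log\partial\overline{\Mcal}_g)$ with poles of bounded order along the boundary, so that $\log\|s\|^2$ has at worst controlled (logarithmic) growth, which---combined with the known tame degeneration of the Teichm\"uller metric along $\partial\overline{\Mcal}_g$---is enough to run the maximum principle. Making this boundary estimate precise, thereby reconciling the analytic vanishing with the algebraic growth of $s$, is where the real work lies. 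Alternatively, if a suitable vanishing theorem for global symmetric tensors on $\Mcal_g$ is available in the literature, one may invoke it in place of this analytic argument.
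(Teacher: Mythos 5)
Your opening reduction is exactly the paper's: the filtration \eqref{j4} by fiberwise degree, together with $H^0(\widehat\Mcal_g,\Ocal_{\widehat\Mcal_g})=\CC$, reduces the theorem to the vanishing of global sections of $\sym^d T\widehat\Mcal_g$ for $d\ge 1$ (this is the paper's Proposition \ref{prop1}), and that part of your argument is sound. But this vanishing is the entire content of the theorem, and your proposed proof of it has genuine gaps. First, your $d=1$ warm-up is incorrect as stated: Kobayashi hyperbolicity forbids nonconstant entire curves $\CC\to\Mcal_g$, not holomorphic vector fields. A holomorphic vector field on a non-compact manifold need not be complete, so it does not integrate to a map from $\CC$; the unit disc is hyperbolic yet carries the nowhere-vanishing field $\partial/\partial z$, whose flow simply exits the disc in finite time. (On a \emph{compact} hyperbolic manifold your argument works, but $\Mcal_g$ is not compact, and this non-compactness is precisely the difficulty.) Second, for general $d$ the Bochner/maximum-principle scheme is left open at the decisive point, as you yourself concede: the Teichm\"uller metric is only a Finsler metric, so ``the induced metric on $\sym^d T\Mcal_g$'' and a Bochner identity for it do not literally make sense; the natural Hermitian substitute, the Weil--Petersson metric (which does have suitably negative curvature), is incomplete, so Omori--Yau type almost-maximum principles do not apply directly; and the required comparison between the growth of $\|s\|$ and the degeneration of the metric at the Deligne--Mumford boundary is exactly what you defer as ``where the real work lies.'' Ending with ``invoke a suitable vanishing theorem if one exists in the literature'' is not a proof, so the crux remains unestablished.

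For comparison, the paper proves the vanishing algebraically, sidestepping all metric degeneration issues. Lemma \ref{lemma:abundance} shows that any two points of $\Mcal_g[l]$ ($l\ge 3$) lie on a connected complete curve with smooth irreducible components; this uses that the closure of $\Mcal_g[l]$ in the minimal (Satake-type) compactification $\Acal_g[l]^*$ has boundary of codimension at least two, plus Bertini--Sard. It then suffices to kill sections of $\gamma^*\sym^d T\widehat\Mcal_g$ along each smooth complete curve $C\subset\widehat\Mcal_g$: by Schumacher's positivity of $K_{\Xcal/C}$, Viehweg's nefness of twisted direct images after a totally ramified base change, and Fujita's ampleness criterion, the bundle $\sym^{ds}\widehat{p}_*K^{\otimes 2}_{\widehat\Xcal/\widehat C}\otimes\Ocal_{\widehat C}(-ds\widehat{x}_0)$ becomes globally generated for large $s$, while contraction against $\sigma^s$ maps it to the negative line bundle $\Ocal_{\widehat C}(-ds\widehat{x}_0)$, which has no nonzero sections; hence $\sigma=0$. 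Your analytic intuition --- negativity of $T\Mcal_g$ coming from the fiberwise hyperbolic metric --- is the same germ of an idea (indeed the paper's input is Schumacher's curvature positivity of the fiberwise Poincar\'e metric), but the paper converts it into algebraic positivity restricted to complete curves, where no boundary estimates are ever needed. If you want to complete your argument, that conversion is the missing step you should supply.
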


\begin{proof}
If $\widehat{\Mcal}_g\,\longrightarrow\, \Mcal_g$ is a smooth finite cover as above, then it of course suffices 
to show that $\Pcal\big\vert\widehat\Mcal_g$ has the stated property. The algebra of regular functions on this 
space is filtered as in \eqref{j4}. Recall that the filtration $\Phi_*^{\le d}\Ocal_{\Pcal 
|\widehat\Mcal_g}$ is exhaustive, $$H^0(\widehat\Mcal_g, \,\Phi_*^{\le 0})\,=\,H^0(\widehat\Mcal_g,\, 
\Ocal_{\widehat\Mcal_g})\,=\,\mathbb C$$ and that we have exact sequences
\[
0\,\longrightarrow\, H^0(\widehat\Mcal_g,\, \Phi_*^{\le d-1}\Ocal_{\Pcal |\widehat\Mcal_g})
\,\longrightarrow\,H^0(\widehat\Mcal_g, \,\Phi_*^{\le d}\Ocal_{\Pcal |\widehat\Mcal_g})\,\longrightarrow\,
H^0(\widehat\Mcal_g,\, \sym^dT\widehat\Mcal_g).
\]
So the theorem will follow from Proposition \ref{prop1} below.
\end{proof}

\begin{proposition}\label{prop1}
The sheaf $\sym^d(T\widehat{\Mcal}_g)$ has no nonzero sections when $d\,>\,0$.
\end{proposition}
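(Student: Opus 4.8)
The plan is to rule out a nonzero section of $\sym^d(T\widehat{\Mcal}_g)$ for $d>0$ by a curvature maximum–principle argument that exploits the negative curvature of moduli space. Recall that $\widehat{\Mcal}_g$ carries the Weil--Petersson metric, a K\"ahler metric whose Riemannian sectional curvature is negative and whose Ricci curvature is bounded above by a negative constant. Since the holomorphic bisectional curvature is a positive combination of two Riemannian sectional curvatures, negative sectional curvature forces the bisectional curvature to be $\le 0$; hence $T\widehat{\Mcal}_g$, with the Weil--Petersson metric, is Griffiths seminegative, and so is the induced metric on every symmetric power $\sym^d(T\widehat{\Mcal}_g)$. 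The first step is to fix this metric and record these two curvature facts, as they drive the rest.

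Given a section $s$ of $\sym^d(T\widehat{\Mcal}_g)$, I would study $u:=\log\|s\|^2$. By the Poincar\'e--Lelong identity for a holomorphic section of a Hermitian bundle, $i\partial\bar\partial u$ equals a nonnegative Cauchy--Schwarz term minus $\langle i\Theta\,s,s\rangle/\|s\|^2$, where $\Theta$ is the Chern curvature of $\sym^d(T\widehat{\Mcal}_g)$; Griffiths seminegativity makes $-\langle i\Theta\,s,s\rangle\ge 0$, so $u$ is plurisubharmonic on $\widehat{\Mcal}_g$. If $u$ turns out to be constant, then both nonnegative contributions to $i\partial\bar\partial u$ vanish separately; tracing the resulting identity and using that the negative Ricci curvature makes the mean curvature of $\sym^d(T\widehat{\Mcal}_g)$ negative definite would force $s\equiv 0$. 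Thus the proposition reduces to the Liouville statement that the plurisubharmonic function $u$ is constant.

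The difficulty is that $\widehat{\Mcal}_g$ is noncompact, so a maximum principle is not immediate; everything hinges on the behaviour of $u$ near the boundary $D$ of a smooth compactification $\overline{\Mcal}$ of $\widehat{\Mcal}_g$ with $D$ a simple normal crossings divisor. Here I would combine two inputs. First, Wolpert's asymptotics: in a plumbing coordinate $t$ for a node the Weil--Petersson metric blows up only poly-logarithmically, $\|t\,\partial_t\|^2_{WP}\asymp|\log|t||^{3}$. Second, and crucially, the section $s$ must extend to a section of the logarithmic tangent sheaf $\sym^d\big(T_{\overline{\Mcal}}(-\log D)\big)$, i.e. it has at worst logarithmic poles along $D$. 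Granting this, $\|s\|^2_{WP}=O(|\log|t||^{3d})$, so $u=O(\log\log(1/|t|))=o(\log(1/|t|))$. On each transverse disc $r\mapsto\max_{|t|=r}u$ is convex in $\log r$ and sublinear, hence bounded above, so $u$ is bounded above near $D$; it therefore extends to a plurisubharmonic function on the compact space $\overline{\Mcal}$ and is constant, as needed.

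The main obstacle is exactly the logarithmic-pole claim, and it is here that $g\ge 3$ must enter: for $g=2$ the moduli space is affine, algebraic sections of $\sym^d(T\Mcal_2)$ genuinely acquire poles of positive order along $D$, and the proposition fails — consistent with $\Pcal_2$ being affine. I expect to establish the bound by identifying, via Kodaira--Spencer theory, $T_{\overline{\Mcal}}(-\log D)$ with the natural (Deligne) extension of the Hodge-theoretic bundle underlying $T\widehat{\Mcal}_g$ — concretely through $\Omega^1_{\overline{\Mcal}}(\log D)\cong\overline{\pi}_*\,\omega^{\otimes 2}_{\overline{\Xcal}/\overline{\Mcal}}$ for the universal stable family — and then using the Schmid and Cattani--Kaplan--Schmid norm estimates together with the absence of nonconstant regular functions on $\Mcal_g$ for $g\ge 3$ to force any algebraic section to be of moderate growth. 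Making this extension step precise, rather than the soft curvature argument, is the technical heart of the proof.
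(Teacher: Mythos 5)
Your overall strategy --- Griffiths seminegativity of the Weil--Petersson metric makes $u=\log\|s\|^2$ plurisubharmonic, then a Liouville argument on a compactification --- is coherent up to the boundary analysis, but the proof has a genuine gap exactly where you locate its ``technical heart'': the claim that an algebraic section $s$ of $\sym^d(T\widehat{\Mcal}_g)$ extends to a section of $\sym^d\big(T_{\overline{\Mcal}}(-\log D)\big)$. A priori, $s$ is only a rational section of this (or of any other) coherent extension, so it has poles of some finite but unbounded order $k\ge 0$ along $D$; then $\|s\|^2_{WP}\lesssim |t|^{-2k}(\log 1/|t|)^{C}$, and $u$ grows like $2k\log(1/|t|)$, which destroys the $o(\log(1/|t|))$ bound that your psh-extension step requires. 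None of the tools you cite can bound $k$: the isomorphism $\Omega^1_{\overline{\Mcal}}(\log D)\cong \overline{\pi}_*\omega^{\otimes 2}_{\overline{\Xcal}/\overline{\Mcal}}$ identifies what the log extension \emph{is} but says nothing about whether $s$ lands in it; the Schmid and Cattani--Kaplan--Schmid estimates control Hodge norms of flat sections of a variation of Hodge structure, whereas $\sym^d T\widehat{\Mcal}_g$ carries no flat structure and $s$ is an arbitrary algebraic section; and the absence of nonconstant regular functions on $\Mcal_g$ is a statement about $\Ocal_{\Mcal_g}$ with no mechanism for controlling pole orders of sections of a tensor bundle. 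Since for $g\ge 3$ the moderate-growth claim is vacuously equivalent to the proposition itself (it holds because there are no nonzero sections at all), the argument is circular precisely at the point where the hypothesis $g\ge 3$ has to do its work. (A minor point: Wolpert's asymptotics are inverted in your write-up --- one has $\|t\partial_t\|^2_{WP}\asymp (\log 1/|t|)^{-3}$, it is the dual covector whose norm grows --- but this error is in your favor and is not the issue.)

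The paper's proof avoids boundary growth estimates altogether and is algebraic: the genus hypothesis enters through the minimal (Satake-type) compactification $\Mcal_g[l]^*$, whose boundary has codimension at least two when $g\ge 3$, so that by a Bertini argument any two points of $\Mcal_g[l]$ lie on a complete curve with smooth irreducible components (Lemma \ref{lemma:abundance}); along such a curve $\gamma\,:\,C\hookrightarrow\widehat{\Mcal}_g$, the bundle $\gamma^*T^*\widehat{\Mcal}_g\,=\,p_*K^{\otimes 2}_{\Xcal/C}$ becomes, after a ramified base change and a twist, ample (Schumacher's fiberwise positivity, Viehweg nefness, Fujita), whence $\gamma^*\sym^d(T\widehat{\Mcal}_g)$ has no nonzero sections and $s$ vanishes on enough curves to vanish identically. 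If you want to salvage your curvature approach, the same codimension-two fact is the natural repair: plurisubharmonic functions extend across analytic subsets of codimension at least two with \emph{no} growth hypothesis, so $u$ would extend to the compact space $\Mcal_g[l]^*$ (modulo care with its singularities) and be constant, after which strict negativity of the Weil--Petersson bisectional curvature forces $s=0$. Either way, the indispensable ingredient is the codimension-two boundary, not a pole estimate along the Deligne--Mumford boundary.
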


Proposition \ref{prop1} will be proved in Section \ref{sep}

For any integer $l\, \geq\, 2$, a smooth curve of genus $g$ with level $l$ structure is an irreducible smooth projective
curve $S$ of genus $g$ together with a symplectic basis of $H^1(S,\, {\mathbb Z}/l{\mathbb Z})$ (see \cite{Pu}, \cite{Lo}).
Let $\Mcal_g[l]$ denote the moduli space of smooth curves of genus $g$ with level $l$ structure.

For the proof of Proposition \ref{prop1} we need the following lemma.

\begin{lemma}\label{lemma:abundance}
Let $l\, \geq\, 3$ be an integer, and let $\Mcal_g[l]\,\longrightarrow\, \Mcal_g$ be the finite cover
which parametrizes the smooth projective curves of genus $g$ endowed with a level $l$ structure.
Then any two points of $\Mcal_g[l]$ can be connected by a complete curve in $\Mcal_g$ whose irreducible
components are smooth.
\end{lemma}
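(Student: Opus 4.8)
The plan is to reduce the statement to a chain-connectivity assertion and then to establish the latter by an irreducibility argument. Since a finite union of irreducible smooth complete curves that meet pairwise is again a complete curve all of whose irreducible components are smooth, it suffices to show that the relation ``$p$ and $q$ lie on a common irreducible smooth complete curve contained in $\Mcal_g$'' generates a connected relation on $\Mcal_g[l]$; that is, that any two points are joined by a chain of such curves. First I would work directly on $\Mcal_g[l]$, which is smooth and quasiprojective for $l\ge 3$ because the level-$l$ subgroup of the mapping class group is torsion-free, together with a smooth projective compactification $\overline{\Mcal}_g[l]$ whose boundary $\partial$ is a normal crossings divisor (such a compactification exists, e.g.\ by \cite{Lo}). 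The key translation is that a complete curve in $\Mcal_g$ is exactly a complete curve in the compactification that is disjoint from $\partial$.

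The geometric heart of the argument is the construction of enough irreducible smooth complete curves lying inside $\Mcal_g$ and passing through prescribed points. I would produce them from fibered projective surfaces: given a smooth projective surface $Y$ with a morphism $Y\longrightarrow B$ onto a smooth complete curve $B$, all of whose fibers are smooth of genus $g$, the induced classifying morphism $B\longrightarrow \Mcal_g$ has image a complete curve whose normalization $B$ is smooth, and after rigidifying by a level-$l$ structure it lifts along the finite \'etale cover $\Mcal_g[l]\longrightarrow \Mcal_g$. Such nonisotrivial complete families exist in abundance precisely for $g\ge 3$, which is exactly the regime in which $\Mcal_g$ (unlike the affine $\Mcal_2$) can contain positive-dimensional complete subvarieties; this is where the hypothesis $g\ge 3$ enters. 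By varying the construction I would arrange that through \emph{every} point of $\Mcal_g$ there passes such a smooth complete curve and that the total incidence correspondence dominates $\Mcal_g$, so that through a general point these curves move in a positive-dimensional family and sweep out a Zariski-dense open subset.

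With a sufficiently large and mobile family of smooth complete curves in hand, I would finish by a standard openness-plus-irreducibility step. The chain-connected equivalence classes of the relation above are constructible, and because the complete curves through a general point move, the class of a general point contains a Zariski neighborhood of it and is therefore open; an irreducible variety admits only one nonempty Zariski-open equivalence class, so all general points are chain-connected, and the remaining points are attached by the complete curves that, by construction, pass through them and meet the dense open locus. Pulling this up the finite \'etale cover $\Mcal_g[l]\longrightarrow \Mcal_g$ and invoking the irreducibility of $\Mcal_g[l]$ transfers the conclusion upstairs. The hard part will be the construction in the second paragraph: a naive Bertini or complete-intersection curve through two prescribed points necessarily meets the boundary $\partial\overline{\Mcal}_g$, so one cannot avoid it by general position, and the entire weight of the proof rests on exhibiting curves that simultaneously avoid the boundary, pass through (or near) the prescribed points, and have smooth components.
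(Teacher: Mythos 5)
There is a genuine gap, and it sits exactly where you say it does: your second paragraph is not a construction but a wish list. You need smooth complete curves in $\Mcal_g$ that pass through \emph{prescribed} points (ultimately through every point, with enough mobility to sweep out dense opens), and you offer only the existence of Kodaira-type fibered surfaces. Such fibrations do exist for $g\ge 3$, but they produce particular complete curves; there is no known direct way to force one through an arbitrary prescribed point of $\Mcal_g$, let alone to make the incidence correspondence dominant. In fact, the standard proof that every point of $\Mcal_g$ ($g\ge 3$) lies on a complete curve is precisely the argument your proposal is supposed to replace, so as written the proposal is circular: the ``openness-plus-irreducibility'' finish consumes exactly the statement you never establish.

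Moreover, your stated reason for abandoning general position --- that ``a naive Bertini or complete-intersection curve through two prescribed points necessarily meets the boundary'' --- is true only because you chose the wrong compactification. That is the one idea the paper has and your proposal lacks: instead of a smooth normal crossings compactification such as $\overline{\Mcal}_g[l]$ (whose boundary is a divisor, so unavoidable by linear sections), the paper uses the Torelli map $\Mcal_g[l]\to\Acal_g[l]$ and takes the closure $\Mcal_g[l]^*$ in the \emph{minimal} (Satake) compactification $\Acal_g[l]^*$. There the boundary parametrizes products of Jacobians of components of stable curves, and for $g\ge 3$ its dimension is at most $3g-5$, i.e.\ codimension at least two (this is exactly where $g\ge 3$ enters; for $g=2$ the boundary is a divisor, matching the affineness of $\Mcal_2$). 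With a codimension-two boundary, Bertini--Sard applied to linear sections of $\Mcal_g[l]^*\subset\PP^N$ through a point $p$ produces, for generic linear subspaces of the right codimension, smooth complete curves through $p$ entirely contained in $\Mcal_g[l]$; these sweep out a dense constructible set, hence contain a dense open $U_p$, and two points $p,q$ are then joined by curves through any $r\in U_p\cap U_q$. Your chain-connectivity reduction and the final two-curve gluing are fine and agree with the paper's last step, but without the switch of compactification (or an actual construction of mobile Kodaira fibrations, which would be a substantial theorem on its own) the proof does not go through.
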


\begin{proof}
We first prove that $\Mcal_g[l]$ admits a projective compactification with codimension two boundary, a fact that 
is well-known among experts. Let $\Acal_g[l]$ stand for the moduli space of principally polarized abelian 
varieties with full level $l$ structure. Both $\Mcal_g[l]$ and $\Acal_g[l]$ are known to be smooth \cite{Pu},
\cite{Lo}. The Torelli 
theorem asserts that the evident period map $P\,:\, \Mcal_g[l]\,\longrightarrow\,\Acal_g[l]$ identifies 
$\Mcal_g[l]$ with a locally closed subset of $ \Acal_g[l]$. According to Mumford (see \cite[Theorem 9.28]{Na}), this
map $P$ extends as a morphism from the Deligne-Mumford compactification $\overline{\Mcal}_g[l]$ of $\Mcal_g[l]$ to 
the minimal compactification $\Acal_g[l]^*$. This extended map essentially assigns to a stable curve the product the Jacobians 
of its irreducible components. So if we denote the closure of $\Mcal_g[l]$ in $\Acal_g[l]^*$ by $\Mcal_g[l]^*$ 
then the `minimal' boundary $\Mcal_g[l]^*\setminus \Mcal_g[l]$ is of codimension at least two in $\Mcal_g[l]^*$. We note that
the maximal dimension $3g-5\,=\,(3g-3)-2$ is in fact realized by the boundary component defined by one-point unions of a 
genus $(g-1)$-curve and a genus 1 curve. Since the minimal compactification $\Acal_g[l]^*$ is projective, so is 
$\Mcal_g[l]^*$.

We now assume $\Mcal_g[l]^*$ realized as a closed subset of some $\PP^N$. Given $p\,\in\,\Mcal_g[l]$, consider the 
Grassmannian $Gr_p$ of linear subspaces of $\PP^N$ of codimension $3g-2$ passing through $p$. By the
Bertini-Sard theorem (see \cite{Ve}), the subspaces which
\begin{itemize}
\item avoid the minimal boundary $\Mcal_g[l]^*\setminus \Mcal_g[l]$,
and

\item meet $\Mcal_g[l]$ transversally
\end{itemize}
make up an open dense subspace $Gr^\circ_p\,\subset\, Gr_p$. By
definition every member of $Gr^\circ_p$ is represented by a linear subspace $L$ for which $\Mcal_g[l]^*\bigcap
L$ is a smooth projective curve contained in $\Mcal_g[l]$ which passes through $p$. The union of these
curves is a constructible subset of $\Mcal_g[l]$ that is Zariski dense. Hence it contains a nonempty Zariski
open subset $U_p$ of $\Mcal_g[l]$. If $q\,\in\,\Mcal_g[l]$ is another point, then $U_p\bigcap U_q$ will be
nonempty and so if $r\,\in \,U_p\bigcap U_q$, then there exist smooth projective curves $C'$ and $C''$ in
$\Mcal_g[l]$ which contain respectively $\{p,\,r\}$ and $\{q,\,r\}$. Their union $C'\cup C''$ is then as desired.
\end{proof}

\section{Proof of proposition}\label{sep}

\begin{proof}[Proof of Proposition \ref{prop1}]
Let us first note that that if this vanishing property holds for a given cover $\widehat{\Mcal}_g$, then it holds 
for all intermediate smooth covers. We can therefore assume without loss of generality that $\widehat{\Mcal}_g$ 
dominates a cover of the form $\Mcal_g[l]$ for some $l\,\ge \,3$. The resulting covering map 
$\widehat{\Mcal}_g\,\longrightarrow\,\Mcal[l]$ is then \'etale and as $\widehat{\Mcal}_g$ is connected, it will 
then by Lemma \ref{lemma:abundance} have the same property as $\Mcal[l]$: any two points of $\widehat{\Mcal}_g$ lie 
on a connected complete curve whose irreducible components are smooth. Hence it suffices to show that for every 
smooth curve
\begin{equation}\label{ega}
\gamma\, :\, C\,\hookrightarrow\, \widehat{\Mcal}_g
\end{equation}
and every positive integer $d\,>\,0$, the vector 
bundle $\gamma^*\sym^d(T\widehat{\Mcal}_g)$ has no nonzero sections.

Let $p\,:\, \Xcal\, \longrightarrow\, C$ be the restriction of the universal family to $C$ in \eqref{ega}. The relative cotangent 
bundle $K_{\Xcal/C}$ is ample, for if we equip it with the fiberwise Poincar\'e metric, then according to 
\cite[p.~2, Main Theorem]{Sc} the curvature of the corresponding Chern connection is positive. Ampleness is an open 
property in the N\'eron-Severi group tensored with $\mathbb Q$ and so if we choose $x_0\in C$, then there is a positive 
integer $\ell_0$ such that $K_{\Xcal/C}^{\otimes \ell}\otimes p^*\Ocal_C(-2x_0)$ is ample for all $\ell\, \geq\, 
\ell_0$.

Choose a  finite map
$$
\pi\, :\, \widehat{C}\, \longrightarrow\, C
$$
of degree $\ell\, \geq\, \ell_0$ which has total ramification in $x_0$, and denote the unique point of $\widehat C$ 
over $x_0$ by $\widehat{x}_0$. This ensures that if we make the corresponding base change of $\Xcal/C$ over 
$\pi$,
$$
\begin{matrix}
\widehat{\Xcal}& \stackrel{\widetilde\pi}{\longrightarrow} & \Xcal\,\\
\,\,\, \Big\downarrow \widehat p && \, \,\,\,\Big\downarrow p \\
\widehat C & \stackrel{\pi}{\longrightarrow} & C
\end{matrix}
$$
the line bundle $\big(K_{\widehat{\Xcal}/\widehat C} \otimes \widehat{p}^*\Ocal_{\widehat C}(-2\widehat{x}_0)\big)^{\otimes l}$
is the pull-back  of an  ample  line bundle, namely $K_{\Xcal/C}^{\otimes l}\otimes  p^*\Ocal_{C}(-2x_0)$, along  the finite morphism $\widetilde\pi$, so that 
$K_{\widehat{\Xcal}/\widehat C} \otimes \widehat{p}^*\Ocal_{\widehat C}(-2\widehat{x}_0)$ is also ample (see for instance  \cite[p.~25, Proposition 4.4]{Ha}).
Then \cite[p.~75, Proposition 2.43]{Vi} says that the direct image
$$
\widehat{p}_*(K_{\widehat{\Xcal}/\widehat C}^{\otimes 2}\otimes \widehat{p}^*\Ocal_{\widehat C}(-2\widehat{x}_0))
\,=\, \widehat{p}_*K_{\widehat{\Xcal}/\widehat C}^{\otimes 2} \otimes \Ocal_{\widehat C}(-2\widehat{x}_0)
$$
on $\widehat C$ is a nef vector bundle. As $\Ocal_{\widehat C}(\widehat{x}_0)$ is ample, this implies that the 
vector bundle $\widehat{p}_* K^{\otimes 2}_{\widehat{\Xcal}/\widehat C}\otimes \Ocal_{\widehat C}(-\widehat{x}_0)$ 
is ample \cite[p.~360, Proposition 2.4]{Fu}. In particular, there exists an integer $r_0$ such that 
$\sym^{r}\widehat{p}_* K^{\otimes 2}_{\widehat{\Xcal}/\widehat C}\otimes \Ocal_{\widehat C}(-rx_0)$ is globally 
generated for $r\,\ge \,r_0$. But $\widehat{p}_* K^{\otimes 2}_{\widehat{\Xcal}/\widehat C}$ is the pull-back of 
$T^*\widehat{\Mcal}_g$ under the composite $\gamma\circ\pi$ (see \eqref{ega}) and so if $\sigma$ is section of 
$\gamma^*\sym^d(T\widehat{\Mcal}_g)$, then for every integer $s\,\ge\, 0$, its $s$-th power $\sigma^s$ defines a 
contraction homomorphism
\[
\iota_{\sigma^s}: \sym^{ds}\widehat{p}_* K^{\otimes 2}_{\widehat{\Xcal}/\widehat C}\otimes \Ocal_{\widehat C}(-ds\widehat{x}_0)\,\longrightarrow\, \Ocal_{\widehat C}(-ds\widehat{x}_0),
\]
which is nonzero when $\sigma$ is. If we take $s\,>\,\frac{r_0}{d}$, then the left hand side is globally generated, whereas the 
right hand side has no nonzero sections. Hence we conclude that $\sigma\,=\,0$.
\end{proof}

\section*{Acknowledgements}

The author is very grateful to Eduard Looijenga for sharing his ideas.


\end{document}